\documentclass[preprint,review,10pt]{amsart}
\usepackage{amsmath,amssymb,amsfonts,xspace,mathrsfs}
\newtheorem{theorem}{Theorem}[section]

\theoremstyle{definition}
\newtheorem{definition}[theorem]{Definition}
\newtheorem{example}[theorem]{Example}

\newtheorem{proposition}[theorem]{Proposition}

\theoremstyle{remark}

\numberwithin{equation}{section}

\begin{document}

\title{$(n,Q)$-ideals and $\phi$-$(n,Q)$-ideals of commutative rings }

\author{Mahdi Anbarloei}
\address{Department of Mathematics, Faculty of Sciences,
Imam Khomeini International University, Qazvin, Iran.
}

\email{m.anbarloei@sci.ikiu.ac.ir}


\subjclass[2020]{ 13A15, 13E99  }


\keywords{ $(n,Q)$-ideal, $\phi$-$(n,Q)$-ideal}

\begin{abstract}
 In this paper, we introduce and study the notions of  $(n,Q)$-ideals and $\phi$-$(n,Q)$-ideals of commutative rings. 

\end{abstract}
\maketitle

\section{Introduction} 
Throughout this paper, we assume  that all rings are commutative with identity. Let $A$ be a commutative ring and $I$ be an ideal of $A$. We will denote by $\mathcal{I}(A)$, the the set of all ideals of $A$. Let $a_1,\ldots,a_n$ be elements in $A$ where  $n$ is a positive integer. Then,  $a_1\cdots \widehat{a_t} \cdots a_n$ indicates that $a_t$ is omitted from $a_1 \cdots a_n$.

The significance of prime and primary ideals in ring theory is highlighted by the extensive volume of research dedicated to these concepts and their extended forms. The study of generalized prime ideals took a major turn in 2007 upon Ayman Badawi's introduction of 2-absorbing ideals within commutative rings \cite{Badawi}. Later, Ulucak et al. introduced the notion of $n$-absorbing $\delta$-primary ideals as a generalization of 2-absorbing ideals where $\delta: \mathcal{I}(A) \to \mathcal{I}(A)$   is a function satisfying certain properties \cite{Ulucak}. A proper ideal $I$ of $A$ is called an $n$-absorbing $\delta$-primary ideal if whenever, $a_1 \cdots a_{n+1} \in I$ for $a_1, \ldots, a_{n+1} \in A$, then $a_1 \cdots a_n \in I$ or $a_1 \cdots \widehat{a_t} \cdots  a_{n+1} \in \delta(I)$ for some $1 \leq t \leq n$. Let $\phi: \mathcal{I}(A) \to  \mathcal{I}(A) \cup \{\varnothing\}$ be a function. Mostafanasab and Darani presented the idea of $\phi$-$n$-absorbing primary ideals. They said that a proper ideal $I$ of $A$ is   a $\phi$-$n$-absorbing  primary ideal if whenever, $a_1 \cdots a_{n+1} \in I \backslash \phi(I)$ for $a_1, \ldots, a_{n+1} \in A$, then $a_1 \cdots a_n \in I$ or $a_1 \cdots \widehat{a_t} \cdots  a_{n+1} \in \sqrt{I}$ for some $1 \leq t \leq n$. Let $J$ and $N$ be the Jacobson radical and nilradical of $A$, respectively. The notions  of $J$-ideals  and $N$-ideals  were proposed in \cite{Khashan2} and \cite{Tekir}, respectively. To unify the concepts  of prime ideals, primary ideals, $J$-ideals and $n$-ideals, 
Mimouni introduced a new class of ideals called $Q$-ideals  \cite{Mimouni}. Let $I \subseteq Q$ be ideals of $A$. The ideal $I$ is a $Q$-ideal if for every $a,b \in A$, $ab \in I$ and $a \notin Q$ implies that $b \in I$. In \cite{Khalfi}, Khalfi generalized the concetp of $J$-ideals to $\phi$-$(n,I)$-ideals  and in \cite{Anebri},  Anebri et al. extended the notion of $N$-ideals to $\phi$-$(n,N)$-ideals.

 In this paper, our purpose is to introduce and study the notion of $(n,Q)$-ideals as a generalization of $Q$-ideals.  Furthermore, to establish a common framework for $\phi$-$n$-absorbing  ideals, $\phi$-$n$-absorbing primary ideals,  $\phi$-$(n,J)$-ideals, and $\phi$-$(n,N)$-ideals, we put forward the notion of $\phi$-$(n,Q)$-ideals.






\section{$(n,Q)$-ideals}

\begin{definition}
Let $I$ and $ Q$  be proper ideals of $A$. We say that $I$ is  an  $(n,Q)$-ideal of $A$ if whenever $a_1\cdots a_{n+1} \in I  $ for $a_1,\ldots,a_{n+1} \in A$, then $a_1\cdots  a_n \in I$ or there exists $1 \leq t \leq n$ such that $a_1\cdots \widehat{a_t} \cdots a_{n+1} \in Q.$
\end{definition}
\begin{example}
Let $A=\mathbb{Z}[x,y,z]$, $Q=\langle xy,z  \rangle$ and  $I=\langle xy \rangle$. Then, $I$  is a $(2,Q)$-ideal of $A$, but it is not $Q$-ideal. Indeed, $x  y \in I$ and $x,y \notin Q$ but  $y,x \notin I$.
\end{example}
\begin{proposition} \label{1}
Let $I $   be a proper ideal of $A$. If $I$ is an  $(n,Q)$-ideal of $A$ where $Q$ is a proper ideal of $A$, then $I   \subseteq Q$.
\end{proposition}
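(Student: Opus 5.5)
Take an arbitrary $a \in I$ and feed the definition of an $(n,Q)$-ideal a product of $n+1$ factors built from $a$ and copies of $1$, arranged so that both alternatives in the definition force $a \in Q$. Put $a_1=\cdots=a_n=1$ and $a_{n+1}=a$. Then
\[
a_1\cdots a_{n+1}=a\in I ,
\]
so the hypothesis applies.

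It gives two cases.
\begin{itemize}
\item The first alternative says $a_1\cdots a_n=1\in I$. This contradicts the assumption that $I$ is proper, so it cannot occur.
\item The second alternative gives some $1\le t\le n$ with $a_1\cdots\widehat{a_t}\cdots a_{n+1}\in Q$. Since $t\le n$, the omitted factor is one of the $1$'s, so this product is $1^{\,n-1}\cdot a=a$. Hence $a\in Q$.
\end{itemize}
As $a\in I$ was arbitrary, $I\subseteq Q$.

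There is no real obstacle; the only point is to place $a$ in the last slot $a_{n+1}$. That slot is never omitted in the second alternative, and it is excluded from the product $a_1\cdots a_n$ in the first alternative, which properness of $I$ then rules out. Properness of $Q$ is not needed for this argument.
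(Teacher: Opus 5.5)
Your proof is correct and is essentially the paper's argument. The paper also feeds $1\cdots 1\cdot a$ into the definition, but phrases it by contradiction: an $a\in I\setminus Q$ would force $1\in I$. You argue directly instead, and your remark that properness of $Q$ is never used is accurate.
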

\begin{proof}
Suppose that $I$ is an $(n,Q)$-ideal for some proper ideal $Q$ of $A$  such that  $I    \nsubseteq Q$. Then, there exists $a \in I$ such that $a \notin Q$. Since $I$ is a  $(n,Q)$-ideal of $A$ and $1 \cdots 1\cdot  a\in I$ , we get $1  \in I$ which is impossible. Hence, $I \subseteq Q$, as needed.
\end{proof}
Recall from \cite{Khalfi} that a proper ideal $I$ of $A$ is an  $(n,J)$-ideal if whenever $a_1\cdots a_{n+1} \in I  $ for $a_1,\ldots,a_{n+1} \in A$, then $a_1\cdots  a_n \in I$ or there exists $1 \leq t \leq n$ such that $a_1\cdots \widehat{a_t} \cdots a_{n+1} \in J.$ 
\begin{theorem}\label{2}
Let $I$ be a proper ideal of $A$. Then, $I$ is an $(n,J)$-ideal of $A$ if and only if $I$ is an $(n,M)$-ideal for each  $M \in \text{Max}(A)$.
\end{theorem}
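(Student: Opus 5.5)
The forward implication should be immediate. Since $J=\bigcap_{M\in \text{Max}(A)}M$, we have $J\subseteq M$ for every maximal ideal $M$. So if $I$ is an $(n,J)$-ideal and $a_1\cdots a_{n+1}\in I$, then either $a_1\cdots a_n\in I$, or some $a_1\cdots \widehat{a_t}\cdots a_{n+1}$ lies in $J$ and hence in $M$. Thus $I$ is an $(n,M)$-ideal for each $M\in\text{Max}(A)$. Each such $M$ is proper, and Proposition \ref{1} also gives $I\subseteq M$ for all $M$, hence $I\subseteq J$. I will keep this inclusion for the converse.

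For the converse, assume $I$ is an $(n,M)$-ideal for every $M\in\text{Max}(A)$. Take $a_1\cdots a_{n+1}\in I$ with $a_1\cdots a_n\notin I$, and write $b_t=a_1\cdots \widehat{a_t}\cdots a_{n+1}$ for $1\le t\le n$. The hypothesis gives, for each maximal $M$, an index $t(M)$ with $b_{t(M)}\in M$. We must produce one index $t$ with $b_t\in M$ for \emph{all} $M$, i.e.\ $b_t\in J$. I will argue by contradiction: suppose that for each $t\in\{1,\dots,n\}$ there is a maximal ideal $M_t$ with $b_t\notin M_t$. Only finitely many maximal ideals $M_1,\dots,M_n$ are involved, possibly with repetitions. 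Let $\mathfrak m_1,\dots,\mathfrak m_k$ be the distinct ones; they are pairwise comaximal.

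The key step, and the main obstacle, is to turn this local information into a single new product in $I$ to which one $(n,M)$-condition applies and yields a contradiction. The index $t(M)$ can vary with $M$, so the condition at one maximal ideal says nothing directly about another.

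My first attempt uses the Chinese Remainder Theorem on $\mathfrak m_1,\dots,\mathfrak m_k$ to choose auxiliary elements. For instance, pick $e_i\in\bigcap_{j\ne i}\mathfrak m_j$ with $e_i\equiv 1 \pmod{\mathfrak m_i}$. Then replace the factors $a_1,\dots,a_n$ by suitable combinations, say $a_t' = a_t+x_t$ with each $x_t$ built from the $e_i$ and from elements of $I\subseteq J$. The aim is that the new product still lies in $I$, that $a_1'\cdots a_n'\notin I$, and that each new omitted product $b_t'$ is congruent to $b_t$ modulo $M_t$, so that $b_t'\notin M_t$ for every $t$. Applying the $(n,M)$-condition at a well-chosen $M\in\{\mathfrak m_1,\dots,\mathfrak m_k\}$ to the new tuple should then give the contradiction.

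If the perturbation proves too delicate, the fallback is to reduce to the case of a single maximal ideal. If all the $M_t$ coincide with one $M$, the $(n,M)$-condition applied directly to $a_1,\dots,a_{n+1}$ already contradicts $b_t\notin M$ for all $t$. So the real work is forcing this coincidence, which is exactly where the comaximality and the inclusion $I\subseteq J$ from Proposition \ref{1} must be exploited.
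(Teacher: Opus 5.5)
\textbf{There is a genuine gap: the converse is never proved, and it cannot be, because it is false for $n\ge 2$.} Your forward direction is correct. For the converse, the step you call the main obstacle is only stated as an aim. Each maximal ideal $M$ is prime. So if $a_1\cdots a_{n+1}\in I\subseteq M$, some $a_i\in M$. Since $n\ge 2$, you can pick $t\in\{1,\dots,n\}$ with $t\ne i$, and then $a_1\cdots\widehat{a_t}\cdots a_{n+1}\in M$. (This is also Theorem \ref{3}, since a prime ideal is $(n-1)$-absorbing.) Hence for $n\ge 2$, the hypothesis ``$I$ is an $(n,M)$-ideal for every $M\in\text{Max}(A)$'' says nothing beyond $I\subseteq J$.

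Concretely, take $A=\mathbb{Z}/30\mathbb{Z}$, $I=0$ and $n=2$.
\begin{itemize}
\item $J=2A\cap 3A\cap 5A=0$.
\item By the argument above, $I$ is a $(2,M)$-ideal for every maximal $M$.
\item However, $2\cdot 3\cdot 5=0$ while $2\cdot 3$, $3\cdot 5$ and $2\cdot 5$ are all nonzero. So $I$ is not a $(2,J)$-ideal.
\end{itemize}
For general $n\ge 2$, use $n+1$ distinct primes in the same way.

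This also shows why both of your plans must fail. Any perturbed tuple whose product lies in $I$ automatically satisfies the $(n,M)$-condition at every $M$, so no contradiction can arise at a ``well-chosen'' $M$. And the $M_t$ cannot be forced to coincide: in the example, $b_1=15$ lies outside only $2A$, and $b_2=10$ lies outside only $3A$.

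Your diagnosis of the real issue is right: the witness index $t(M)$ depends on $M$. The paper's proof of the converse overlooks exactly this. It takes a single $t$ with $a_1\cdots\widehat{a_t}\cdots a_{n+1}\notin J$ and a single $M_0$ not containing that element. It then invokes the $(n,M_0)$-condition to conclude $a_1\cdots a_n\in I$. That conclusion would require $a_1\cdots\widehat{a_s}\cdots a_{n+1}\notin M_0$ for \emph{all} $s$, not just one. So neither argument establishes the converse.

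The statement is correct only for $n=1$. There the only index is $t=1$, so $a_2\in M$ for every $M$ gives $a_2\in J$ immediately.
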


\begin{proof}
Let $I$ be a $n$-$J$-ideal of $A$ and $M \in \text{Max}(A)$. Suppose that $a_1\cdots a_{n+1} \in I$ for $a_1,\ldots,a_n \in A$ and $a_1\cdots \widehat{a_t} \cdots a_{n+1} \notin M$ for some $1 \leq t \leq n$. It follows that  $a_1\cdots \widehat{a_t} \cdots a_{n+1} \notin \text{J}(A)$. Since $I$ is an $(n,J)$-ideal of $A$, we get $a_1\cdots a_n \in I$. Thus, $I$ is a $(n,M)$-ideal of $A$. Conversely, assume that  $I$ is a $n$-$M$-ideal for each  $M \in \text{Max}(A)$. So, we obtain  $I \subseteq M$ for every $M \in \text{Max}(A)$ by Proposition \ref{1}. This  means $\text{J}(A)$ contains $I$. Now, suppose that $a_1\cdots a_{n+1} \in I$ for $a_1,\ldots,a_n \in A$ and $a_1\cdots \widehat{a_t} \cdots a_{n+1} \notin \text{J}(A)$ for some $1 \leq t \leq n$. Then, we conclude that  $a_1\cdots \widehat{a_t} \cdots a_{n+1} \notin M_0$ for some $M_0 \in \text{Max}(A)$. By the hypothesis, $I$  is an $(n,M_0)$-ideal of $A$. Therefore, we get $a_1\cdots a_n \in I$. Thus,  $I$ is an $(n,J)$-ideal of $A$.
\end{proof}
\begin{proposition} \label{2.1}
Assume that  $A$ is a commutative ring. 
\begin{enumerate}
\item If $I$ is an  $(n,Q)$-ideal   of $A$ for each ideal $Q$   properly containing $I$, then $I$ is an $n$-absorbing primary ideal.
\item  If $I$ is an $(n,Q)$-ideal of $A$ for each ideal $Q$   properly containing $\sqrt{I}$, then $\sqrt{I}$ is an $n$-absorbing  ideal.
\item The following conditions are equivalent:
\begin{enumerate}
\item[(i)] $I$ is an  $n$-absorbing primary ideal.
\item[(ii)] $I$ is an $(n,\sqrt{I})$-ideal.
\item[(iii)] $I$ is an $(n,Q)$-ideal of $A$ for each ideal $Q$ of $A$ containing $\sqrt{I}$.
\end{enumerate}
\end{enumerate}
\begin{proof}
(1) Suppose that $a_1\cdots a_{n+1} \in I$ for $a_1,\ldots, a_{n+1} \in A$ but $a_1 \cdots a_n \notin I$. Set, $Q=I+\langle a_1 \cdots a_n \rangle $. Then, $I$ is an $(n,Q)$-ideal of $A$ as $I \subsetneq Q$. This implies that $a_1 \cdots \widehat{a_t} \cdots a_{n+	1} \in Q$ for some $1 \leq t \leq n$. Then, there exist $x \in I$ and $y \in A$ such that $a_1 \cdots \widehat{a_t} \cdots a_{n+	1}=x+y(a_1 \cdots a_n)$. Hence, \\

$\hspace{0.5cm}(a_1 \cdots \widehat{a_t} \cdots a_{n+	1})^2=x(a_1 \cdots \widehat{a_t} \cdots a_{n+	1})+y(a_1 \cdots a_n)(a_1 \cdots \widehat{a_t} \cdots a_{n+	1})$

$\hspace{3.4cm}=x(a_1 \cdots \widehat{a_t} \cdots a_{n+	1})+y(a_1 \cdots a_{n+1})(a_1 \cdots \widehat{a_t} \cdots a_n)$.

$\hspace{3.4cm} \in I,$\\

which implies $ a_1 \cdots \widehat{a_t} \cdots a_{n+	1} \in \sqrt{I}$. Therefore, $I$ is an $n$-absorbing primary ideal.

(2) Let $I$ be  an $(n,Q)$-ideal of $A$ for each ideal $Q$ of $A$ properly containing $\sqrt{I}$. Suppose  that $a_1 \cdots a_{n+1} \in \sqrt{I}$ and $a_1 \cdots a_n \notin \sqrt{I}$. This means  that $(a_1 \cdots a_{n+1})^k=a_1^k \cdots a_{n+1}^k\in I$ for some $k \in \mathbb{N}$. Put $Q=\sqrt{I}+\langle a_1 \cdots a_n \rangle$. By the hypothesis, $I$ is an $(n,Q)$-ideal as $\sqrt{I} \subsetneq Q$. Since $I$ is  an $(n,Q)$-ideal of $A$, $a_1^k \cdots a_{n+1}^k\in I$ and $a_1^k \cdots a_n^k\notin I$, we obtain $(a_1 \cdots \widehat{a_t} \cdots a_{n+1})^k=a_1^k \cdots \widehat{a_t^k} \cdots a_{n+1}^k\in Q$ for some $1 \leq t \leq n$. There exists $x \in \sqrt{I}$ and $y \in A$ such that $(a_1 \cdots \widehat{a_t} \cdots a_{n+1})^k=x+y(a_1 \cdots a_n)$. Therefore, we have\\

$(a_1 \cdots \widehat{a_t} \cdots a_{n+1})^{2k}=x(a_1 \cdots \widehat{a_t} \cdots a_{n+1})^k+y(a_1 \cdots a_n)(a_1 \cdots \widehat{a_t} \cdots a_{n+1})^k$

$\hspace{3.1cm}=x(a_1 \cdots \widehat{a_t} \cdots a_{n+1})^k+y(a_1 \cdots a_{n+1})(a_1 \cdots \widehat{a_t} \cdots a_n)^k a_{n+1}^{k-1}$

$\hspace{3.1cm} \in \sqrt{I}.$\\
Hence, $a_1 \cdots \widehat{a_t} \cdots a_{n+1} \in\sqrt{I}$. Consequently, $\sqrt{I}$ is an $n$-absorbing  ideal.

(3) (i) $\Longrightarrow$ (ii) and (ii) $\Longrightarrow$ (iii) are clear.

(iii) $\Longrightarrow$ (i)  Assume that $a_1 \cdots a_{n+1} \in I$ for $a_1,\ldots, a_{n+1} \in A$. Since $I$ is an $(n,\sqrt{I})$-ideal by the hypothesis, we have $a_1\cdots a_n \in I$ or $a_1 \cdots \widehat{a_t} \cdots a_{n+1} \in \sqrt{I}$ for some $1 \leq t \leq n$, as desired. 
\end{proof}
\end{proposition}
\begin{theorem} \label{3}
Let $I \subseteq Q$ be ideals of $A$. If $Q$ is an $(n-1)$-absorbing ideal of $A$, then $I$ is an $(n,Q)$-ideal of $A$. 
\end{theorem}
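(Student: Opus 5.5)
We argue directly from the definitions. Recall that a proper ideal $Q$ is $(n-1)$-absorbing if whenever a product of $n$ elements lies in $Q$, some product of $n-1$ of those elements lies in $Q$. Suppose $a_1\cdots a_{n+1}\in I$ with $a_1,\ldots,a_{n+1}\in A$ and $a_1\cdots a_n\notin I$. We must produce $1\leq t\leq n$ with $a_1\cdots\widehat{a_t}\cdots a_{n+1}\in Q$. Since $I\subseteq Q$, the product $a_1\cdots a_{n+1}$ lies in $Q$. We group it as a product of $n$ factors, namely $a_1,\ldots,a_{n-1}$ and $a_na_{n+1}$.

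Applying the $(n-1)$-absorbing property of $Q$ to these $n$ factors, we omit one factor and the product of the remaining $n-1$ lies in $Q$. There are two cases.

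\emph{Case 1: the omitted factor is $a_na_{n+1}$.} Then $a_1\cdots a_{n-1}\in Q$. Multiplying by $a_{n+1}$ gives $a_1\cdots a_{n-1}a_{n+1}\in Q$, which is exactly $a_1\cdots\widehat{a_n}\cdots a_{n+1}\in Q$. So $t=n$ works.

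\emph{Case 2: the omitted factor is $a_t$ for some $1\leq t\leq n-1$.} Then $a_1\cdots\widehat{a_t}\cdots a_{n-1}(a_na_{n+1})\in Q$, which is $a_1\cdots\widehat{a_t}\cdots a_{n+1}\in Q$. So this $t$ works.

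In either case the conclusion holds. We did not even need the hypothesis $a_1\cdots a_n\notin I$; it is harmless to include. Thus $I$ is an $(n,Q)$-ideal.

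The only subtle points are the bookkeeping of the grouping and the degenerate case $n=1$. For $n=1$, a $0$-absorbing ideal should be read as $Q=A$, or the statement read as vacuous, since $Q$ must be proper in the definition. We therefore assume $n\geq 2$, with $Q$ proper as required by the definition of an $(n,Q)$-ideal. No earlier result is needed beyond the definitions.
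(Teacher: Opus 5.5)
Your proof is correct and rests on the same idea as the paper's: merge two adjacent elements so that $a_1\cdots a_{n+1}\in I\subseteq Q$ becomes a product of $n$ factors, then apply the $(n-1)$-absorbing property of $Q$. The difference is which two elements you merge.

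The paper splits into cases according to whether $a_1\cdots a_n\in Q$:
\begin{itemize}
\item If $a_1\cdots a_n\in Q$, it applies the absorbing property to $a_1,\ldots,a_n$ and multiplies the result by $a_{n+1}$.
\item If $a_1\cdots a_n\notin Q$, it uses the factors $(a_1a_2),a_3,\ldots,a_{n+1}$. The case hypothesis rules out dropping $a_{n+1}$. Dropping the block $a_1a_2$ then has to be handled by multiplying back by $a_2$, which the paper leaves implicit.
\end{itemize}

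You merge $a_na_{n+1}$ instead, so every possible omission lands on an admissible index. Dropping some $a_t$ with $t\in\{1,\ldots,n-1\}$ gives the conclusion directly. Dropping the block gives $a_1\cdots a_{n-1}\in Q$, and multiplying by $a_{n+1}$ gives the case $t=n$. No case split is needed.

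As you note, your argument never uses $a_1\cdots a_n\notin I$. It therefore proves the stronger fact that $a_1\cdots a_{n+1}\in Q$ alone forces $a_1\cdots\widehat{a_t}\cdots a_{n+1}\in Q$ for some $1\le t\le n$, and the theorem then holds for every $I\subseteq Q$ at once.

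Your treatment of $n=1$ is also right: no proper ideal can be $0$-absorbing, so the statement only has content for $n\ge 2$.
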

\begin{proof}
Assume that $a_1\cdots a_{n+1} \in I$ for some $a_1,\ldots,a_{n+1} \in A$ but $a_1\cdots a_n \notin I$. Let $Q$ be  an $(n-1)$-absorbing ideal of $A$. If $a_1\cdots a_n \in Q$, then we obtain $a_1\cdots a_{n-1} \in Q$ or $a_1\cdots \widehat{a_t} \cdots a_{n} \in Q$ for some $1 \leq t \leq n-1$. This implies that $a_1\cdots a_{n-1} \widehat{a_n} a_{n+1} \in Q$ or $a_1\cdots \widehat{a_t} \cdots a_{n}a_{n+1} \in Q$ for some $1 \leq t \leq n-1$.
If $a_1\cdots a_n \notin Q$, then we have $(a_1a_2)a_3\cdots  \widehat{a_t}\cdots a_{n+1} \in Q$ for some $1 \leq t \leq n$ as $Q$ is an $(n-1)$-absorbing ideal of $A$ and $(a_1a_2)a_3\cdots   a_{n+1} \in Q$. Thus, $I$ is an $(n,Q)$-ideal of $A$. 
\end{proof}

\begin{proposition} \label{4}
Let $Q$ be an   ideal of $A$. If $I_j$ is an $(n_j,Q)$-ideal of $A$ for each $1 \leq j \leq k$, then $\cap_{j=1}^k I_j$ is an $(n,Q)$-ideal where $n=\Sigma_{j=1}^k n_j$. 
\end{proposition}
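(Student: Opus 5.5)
The plan is to show that each $I_j$ is already an $(n,Q)$-ideal, and then argue directly for the intersection. Two steps are needed: a monotonicity lemma in the index, and a trivial check for intersections with a common $Q$.

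\emph{Step 1 (monotonicity).} I would show that if $I$ is an $(m,Q)$-ideal, then $I$ is an $(m+1,Q)$-ideal. By induction this gives: $I$ is an $(m',Q)$-ideal for every $m'\geq m$.

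Let $a_1\cdots a_{m+2}\in I$. Apply the $(m,Q)$-property to the $m+1$ elements $a_1,\ldots,a_{m-1},\,(a_ma_{m+1}),\,a_{m+2}$, whose product lies in $I$. There are three possible outcomes.
\begin{enumerate}
\item The product of the first $m$ of them lies in $I$. That product is $a_1\cdots a_{m+1}$, which is what we want.
\item For some $t\leq m-1$, omitting $a_t$ gives a product in $Q$. That product is exactly $a_1\cdots\widehat{a_t}\cdots a_{m+2}$.
\item Omitting the grouped element $a_ma_{m+1}$ gives $a_1\cdots a_{m-1}a_{m+2}\in Q$. Since $Q$ is an ideal, multiplying by $a_{m+1}$ gives $a_1\cdots\widehat{a_m}\cdots a_{m+2}\in Q$.
\end{enumerate}
In every case the $(m+1,Q)$-condition holds. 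The main care is this bookkeeping of which index is omitted. The only delicate point is case (3), where absorption into $Q$ is used.

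\emph{Step 2 (intersection).} Since $n_j\leq n=\sum_{j=1}^k n_j$, Step 1 shows that every $I_j$ is an $(n,Q)$-ideal. The intersection $\bigcap_{j=1}^k I_j$ is proper, because each $I_j$ is proper.

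Now suppose $a_1\cdots a_{n+1}\in\bigcap_{j=1}^k I_j$. If for some $j$ the $(n,Q)$-property of $I_j$ yields $a_1\cdots\widehat{a_t}\cdots a_{n+1}\in Q$ for some $1\leq t\leq n$, we are done. Otherwise, for every $j$ the property must yield $a_1\cdots a_n\in I_j$. Hence $a_1\cdots a_n\in\bigcap_{j=1}^k I_j$.

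So the intersection is an $(n,Q)$-ideal. The same argument in fact gives this with $n=\max_j n_j$, which implies the stated bound with the sum.
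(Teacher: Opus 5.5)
Your proof is correct, and it follows a genuinely different route from the paper.

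The paper works directly with the intersection. Assuming no product $a_1\cdots\widehat{a_t}\cdots a_{n+1}$ ($1\le t\le n$) lies in $Q$, it asserts that for each $j$ there are $n_j$ indices in $\{1,\dots,n\}$ whose factors have product in $I_j$. It then argues that these index sets must be pairwise disjoint, since a shared index would produce an omitted product in $Q$. Finally it counts: $k$ disjoint sets of total size $\sum_j n_j=n$ inside $\{1,\dots,n\}$ exhaust it, so $a_1\cdots a_n\in\bigcap_j I_j$. This is modeled on the Anderson--Badawi proof for intersections of $n_j$-absorbing ideals. There the sum is genuinely needed, because the ideal absorbing the omitted product varies with $j$.

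You instead notice that with a fixed $Q$, the alternative ``some omitted product lies in $Q$'' does not depend on $j$. So an intersection of $(N,Q)$-ideals is trivially an $(N,Q)$-ideal, and you bring all the $I_j$ to a common index with the monotonicity lemma. Your grouping $a_1,\dots,a_{m-1},a_ma_{m+1},a_{m+2}$ and the absorption in case (3) are right, including the degenerate case $m=1$.

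Your route is shorter, self-contained, and gives the sharper bound $\max_j n_j$. By contrast, the paper states its extraction of $n_j$ factors among $a_1,\dots,a_n$ without justification. The definition only speaks of $n_j+1$ factors with a distinguished last one, so making that step rigorous takes an argument much like your Step 1. The paper's combinatorial method is only advantageous where the second ideal depends on $I_j$ (e.g.\ $n$-absorbing or $n$-absorbing primary ideals). In those settings your trivial intersection step would break down.
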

\begin{proof}
Let $a_1 \cdots a_{n+1} \in \cap_{j=1}^k I_j$ for $a_1,\ldots,a_{n+1} \in A$ but $a_1\cdots \widehat{a_t} \cdots a_{n+1} \notin Q$ for all $1 \leq t \leq n$.  Since $I_j$ is an $(n_j,Q)$-ideal of $A$ and $a_1 \cdots a_{n+1} \in I_j$ for all $1 \leq j \leq k$, there exist elements  $1 \leq \alpha_1^j, \alpha_2^j,\ldots,\alpha_{n_j}^j \leq n$ with $a_{\alpha_1^j}, a_{\alpha_2^j},\ldots,a_{\alpha_{n_j}^j}  \in I_j$. Let $\alpha_l^r=\alpha_m^s$ for pairs $r,l$ and $s,m$. Then,

\[a_{\alpha_1^1}  a_{\alpha_2^1}  \cdots  a_{\alpha_{n_1}^1}   \cdots a_{\alpha_1^r}  a_{\alpha_2^r}   \cdots   a_{\alpha_l^r}   \cdots   a_{\alpha_ {n_r}^r}   \cdots    a_{\alpha_1^s}   a_{\alpha_2^s}\]
\[ \cdots       \widehat{a_{\alpha_m^s}} \cdots  a_{\alpha_{n_s}^s} \cdots    a_{\alpha_1^k}   a_{\alpha_2^k}   \cdots   a_{\alpha_{n_k}^k} \in Q \]
which implies $a_1 \cdots \widehat{a_{\alpha_m^s}} \cdots a_na_{n+1} \in Q$ which is a contradiction. Therefore,  ${\alpha_i^j}^,$s are different. From $a_{\alpha_1^j}, a_{\alpha_2^j},\ldots,a_{\alpha_{n_j}^j}  \in I_j$ for all $1 \leq j \leq n$ it follows that $a_1 \cdots a_n=a_{\alpha_1^1}  a_{\alpha_2^1}  \cdots  a_{\alpha_{n_1}^1} \cdots a_{\alpha_1^k}   a_{\alpha_2^k}   \cdots   a_{\alpha_{n_k}^k} \in \cap_{j=1}^k I_j$. Consequently, $\cap_{j=1}^k I_j$ is an $(n,Q)$-ideal of $A$.
\end{proof}

Let     $I$ be a proper ideal of $A$.  Then, we  use $Z_I(A)$ to denote  the set $\{x \in A \mid xs \in I \  \text{for some} \ s \in A \backslash I\}$.
 \begin{theorem} \label{5}
Let $I$ and $Q$ be ideals of $A$ and $S$ be a multiplicative subset of $A$. 
\begin{enumerate}
\item If $I$ is an $(n,Q)$-ideal of $A$ such that  $Q \cap S=\varnothing$, then $S^{-1}I$ is an $(n,S^{-1}Q)$-ideal of $S^{-1}A$.
\item If $S^{-1}I$ is  an $(n,S^{-1}Q)$-ideal of $S^{-1}A$ and $S \cap Z_j=\varnothing$ for $j \in\{I,Q\}$, then $I$ is an $(n,Q)$-ideal of $A$.
\end{enumerate} 
\end{theorem}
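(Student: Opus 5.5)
For part (1), I would work directly with fractions in $S^{-1}A$. Since $Q\cap S=\varnothing$, $S^{-1}Q$ is a proper ideal, and so is $S^{-1}I\subseteq S^{-1}Q$ (using Proposition \ref{1}). Take $\frac{a_1}{s_1}\cdots\frac{a_{n+1}}{s_{n+1}}\in S^{-1}I$. Then there is $u\in S$ with $u a_1\cdots a_{n+1}\in I$. Absorb $u$ into the last factor: $a_1\cdots a_n (ua_{n+1})\in I$. Because $I$ is an $(n,Q)$-ideal, one of two things holds.
\begin{itemize}
\item[(a)] $a_1\cdots a_n\in I$. Then $\frac{a_1}{s_1}\cdots\frac{a_n}{s_n}\in S^{-1}I$.
\item[(b)] $a_1\cdots\widehat{a_t}\cdots a_n(ua_{n+1})\in Q$ for some $1\le t\le n$. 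Then $\frac{a_1}{s_1}\cdots\widehat{\frac{a_t}{s_t}}\cdots\frac{a_{n+1}}{s_{n+1}}=\frac{ua_1\cdots\widehat{a_t}\cdots a_{n+1}}{u\,s_1\cdots\widehat{s_t}\cdots s_{n+1}}\in S^{-1}Q$.
\end{itemize}
The key point is placing $u$ on $a_{n+1}$. That factor appears in every ``omitted'' product, and in the $a_1\cdots a_n$ alternative we do not need it.

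For part (2), I interpret $Z_j$ as $Z_I(A)$ and $Z_Q(A)$. The hypothesis $S\cap Z_I(A)=\varnothing$ gives that $\frac{x}{1}\in S^{-1}I$ implies $x\in I$. Indeed, $vx\in I$ with $v\in S$ and $x\notin I$ would put $v\in Z_I(A)$. The same argument gives $\frac{x}{1}\in S^{-1}Q$ implies $x\in Q$, using $S\cap Z_Q(A)=\varnothing$. Here I assume $Q$ is proper so that $Z_Q(A)$ makes sense, which follows from $S^{-1}Q$ being proper.

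Now suppose $a_1\cdots a_{n+1}\in I$. Then $\frac{a_1}{1}\cdots\frac{a_{n+1}}{1}\in S^{-1}I$. By hypothesis, either $\frac{a_1\cdots a_n}{1}\in S^{-1}I$ or $\frac{a_1\cdots\widehat{a_t}\cdots a_{n+1}}{1}\in S^{-1}Q$ for some $t$. The contraction facts above then give $a_1\cdots a_n\in I$ or $a_1\cdots\widehat{a_t}\cdots a_{n+1}\in Q$, so $I$ is an $(n,Q)$-ideal.

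The only real obstacle is the contraction step in (2), which means extracting the element from the localization using the zero-divisor-type hypothesis. It is handled by the short argument above.
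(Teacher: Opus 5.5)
Your proposal is correct and follows the paper's proof essentially verbatim. In (1) you absorb the element $u\in S$ into $a_{n+1}$ and apply the $(n,Q)$-property to $a_1\cdots a_n(ua_{n+1})\in I$; in (2) you pass to the fractions $\frac{a_i}{1}$ and pull back using $S\cap Z_I(A)=S\cap Z_Q(A)=\varnothing$. Your extra checks fill in details the paper leaves implicit: that $S^{-1}I\subseteq S^{-1}Q$ are proper in (1), and that $I$ and $Q$ are proper in (2).
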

\begin{proof}
(1) Let $I$ be an $(n,Q)$-ideal of $A$. Assume that $  \frac{a
_1}{s_1}\cdots\frac{a_{n+1}}{s_{n+1}} \in S^{-1}I$ for  $ \frac{a
_1}{s_1},\ldots,\frac{a_{n+1}}{s_{n+1}} \in S^{-1}A$ such that $  \frac{a_1}{s_1}\cdots\frac{a_n}{s_n} \notin S^{-1}I$. Hence, we get $ sa_1\cdots a_{n+1}=a_1\cdots a_n(sa_{n+1}) \in I$ for some $s \in S$. Since $I$ is an $(n,Q)$-ideal  of $A$ and $a_1\cdots a_n \notin I$, we get $a_1\cdots \widehat{a_t} \cdots a_n (sa_{n+1})\in Q$ for some $1 \leq t \leq n$ which means  $ \frac{a
_1}{s_1}\cdots \widehat{\frac{a_t}{s_t}} \cdots \frac{a_{n+1}}{s_{n+1}} \in S^{-1}Q$. Thus,  $S^{-1}I$ is an $(n,S^{-1}Q)$-ideal of $S^{-1}A$.

(2) Let  $S^{-1}I$ be an $(n,S^{-1}Q)$-ideal of $S^{-1}A$. Suppose that $  a_1 \cdots a_{n+1} \in I$ for $a_1, \ldots, a_{n+1} \in A$ and $  a_1 \cdots a_n \notin I$ . This means that $\frac{a_1}{1} \cdots \frac{a_{n+1}}{1} \in S^{-1}I$. If $\frac{a_1}{1} \cdots \frac{a_n}{1} \in S^{-1}I$, then we have $sa_1\cdots a_n \in I$ for some $s \in S$. Since  $S \cap Z_I(A)=\varnothing$, we have  $a_1\cdots a_n \in I$, a contradiction. Since $S^{-1}I$ is an $(n,S^{-1}Q)$-ideal of $S^{-1}A$ and $\frac{a_1}{1} \cdots \frac{a_n}{1} \notin S^{-1}I$, we have $\frac{a_1}{1} \cdots \widehat{\frac{a_t}{1}} \cdots  \frac{a_{n+1}}{1} \in S^{-1}Q$ for some $1 \leq t \leq n$. It follows that $sa_1\cdots \widehat{a_t} \cdots a_{n+1} \in Q$. Since $S \cap Z_Q=\varnothing$, we get $a_1\cdots \widehat{a_t} \cdots a_{n+1} \in Q$. Consequently, $I$ is an $(n,Q)$-ideal of $A$.
\end{proof}
\begin{theorem} \label{6}
Let $A= A_1 \times \cdots \times A_r$ be a decomposable ring and let \[ I = I_1 \times \cdots \times I_{\beta_1-1} \times A_{\beta_1} \times I_{\beta_1+1} \times \cdots \times I_{\beta_k-1} \times A_{\beta_k} \times I_{\beta_k+1} \times \cdots \times I_r \] and \[ Q = Q_1 \times \cdots \times Q_{\beta_1-1} \times A_{\beta_1} \times Q_{\beta_1+1} \times \cdots \times Q_{\beta_k-1} \times A_{\beta_k} \times Q_{\beta_k+1} \times \cdots \times Q_r \]be   ideals of $A$ with    $\{\beta_1, \dots, \beta_k\} \subset \{1, \dots, r\}$.  The following conditions are equivalent:
 \begin{enumerate} 
 \item[(1)] $I$ is an $(n,Q)$-ideal of $A$; 
 \item[(2)] $I'  = I_1 \times \cdots \times I_{\beta_1-1} \times I_{\beta_1+1} \times \cdots \times I_{\beta_k-1} \times I_{\beta_k+1} \times \cdots \times I_r$ is an $(n,Q')$-ideal of $A' =A_1 \times \cdots \times A_{\beta_1-1} \times A_{\beta_1+1} \times \cdots \times A_{\beta_k-1} \times A_{\beta_k+1} \times \cdots \times A_r$ where $Q'=Q_1 \times \cdots \times Q_{\beta_1-1} \times Q_{\beta_1+1} \times \cdots \times Q_{\beta_k-1} \times Q_{\beta_k+1} \times \cdots \times I_r$.
  \end{enumerate}
\end{theorem}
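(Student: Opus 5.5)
The idea is to reduce everything to the projection $\pi: A \to A'$ that deletes the coordinates $\beta_1,\ldots,\beta_k$, and to the section $\iota: A' \to A$ that inserts $1$ in those coordinates. The point is that $I$ and $Q$ carry the full ring $A_{\beta_j}$ in each deleted slot. So for any $x \in A$ we have $x \in I$ if and only if $\pi(x) \in I'$, and likewise $x \in Q$ if and only if $\pi(x) \in Q'$. Here I read the last factor of $Q'$ as $Q_r$; the $I_r$ in the statement appears to be a typo. Since $\pi$ and $\iota$ are ring homomorphisms, they respect products, including products with one factor omitted. The whole proof is then a transfer of the defining condition along $\pi$ and $\iota$.

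For (1)$\Rightarrow$(2), take $b_1,\ldots,b_{n+1} \in A'$ with $b_1\cdots b_{n+1} \in I'$, and set $a_i = \iota(b_i)$. Then $\pi(a_1\cdots a_{n+1}) = b_1\cdots b_{n+1} \in I'$, and the deleted coordinates of $a_1\cdots a_{n+1}$ lie in $A_{\beta_j}$ automatically, so $a_1\cdots a_{n+1} \in I$. By (1), either $a_1\cdots a_n \in I$ or $a_1\cdots\widehat{a_t}\cdots a_{n+1} \in Q$ for some $1\le t\le n$. Applying $\pi$ gives $b_1\cdots b_n \in I'$ or $b_1\cdots\widehat{b_t}\cdots b_{n+1} \in Q'$.

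For (2)$\Rightarrow$(1), start from $a_1\cdots a_{n+1} \in I$. Then $\pi(a_1)\cdots\pi(a_{n+1}) \in I'$. By (2), either $\pi(a_1\cdots a_n) \in I'$ or $\pi(a_1\cdots\widehat{a_t}\cdots a_{n+1}) \in Q'$. The membership criterion above lifts these back to $a_1\cdots a_n \in I$ or $a_1\cdots\widehat{a_t}\cdots a_{n+1} \in Q$.

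The only point needing care is properness. The definition requires $I$ and $Q$ (respectively $I'$ and $Q'$) to be proper, and the two sides must agree on this. Again $I = \pi^{-1}(I')$, so $I$ is proper exactly when $I'$ is, and similarly for $Q$ and $Q'$. This is the main obstacle only in the sense of bookkeeping. We must also assume that not all coordinates are deleted, i.e.\ $\{\beta_1,\ldots,\beta_k\} \subsetneq \{1,\ldots,r\}$, so that $A'$ is a genuine nonzero ring. This is implicit in the statement's use of $\subset$.
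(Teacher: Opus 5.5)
Your proof is correct and follows the paper's argument: the paper likewise inserts $1$ in the deleted coordinates to pass from $A'$ to $A$, applies (1), and then deletes those coordinates again; it only says the converse is ``similar,'' and your projection argument via $I=\pi^{-1}(I')$ and $Q=\pi^{-1}(Q')$ supplies that direction explicitly. One small correction: $\iota$ is multiplicative but not additive, so it is not a ring homomorphism --- but your argument only uses $\pi\circ\iota=\mathrm{id}$ and the fact that $\pi$ is a ring homomorphism, so nothing breaks.
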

\begin{proof}
(1) $\Longrightarrow$ (2) Let $I$ be an $(n,Q)$-ideal of $A$ and \\

$\hspace{1cm} (a^{(1)}_1,\ldots,a^{(1)}_{\beta_1-1}, a^{(1)}_{\beta_1+1},\cdots, a^{(1)}_{\beta_k-1}, a^{(1)}_{\beta_k+1}, \dots,a_r^{(1)})\cdots$\\

$\hspace{2cm}(a^{(n+1)}_1,\ldots,a^{(n+1)}_{\beta_1-1}, a^{(n+1)}_{\beta_1+1},\ldots, a^{n+1}_{\beta_k-1}, a^{(n+1)}_{\beta_k+1},\ldots,a_r^{(n+1)}) \in I'$

such that  $a_i^{(k),}s$  are in $A_i$. This implies that\\

$\hspace{1cm} (a^{(1)}_1,\ldots,a^{(1)}_{\beta_1-1}, 1, a^{(1)}_{\beta_1+1},\cdots, a^{(1)}_{\beta_k-1}, 1,a^{(1)}_{\beta_k+1}, \dots,a_r^{(1)})\cdots$\\

$\hspace{2cm}(a^{(n+1)}_1,\ldots,a^{(n+1)}_{\beta_1-1},1, a^{(n+1)}_{\beta_1+1},\ldots, a^{(n+1)}_{\beta_k-1},1, a^{(n+1)}_{\beta_k+1},\ldots,a_r^{(n+1)}) \in I.$\\

Since $I$ is an $(n,Q)$-ideal of $A$, we conclude that\\

$\hspace{1cm} (a^{(1)}_1,\ldots,a^{(1)}_{\beta_1-1}, 1, a^{(1)}_{\beta_1+1},\cdots, a^{(1)}_{\beta_k-1}, 1,a^{(1)}_{\beta_k+1}, \dots,a_r^{(1)})\cdots$\\

$\hspace{2cm}(a^{(n)}_1,\ldots,a^{(n)}_{\beta_1-1},1, a^{(n)}_{\beta_1+1},\ldots, a^{(n)}_{\beta_k-1},1, a^{(n)}_{\beta_k+1},\ldots,a_r^{(n)}) \in I$\\

or\\

$\hspace{0.75cm} (a^{(1)}_1,\ldots,a^{(1)}_{\beta_1-1}, 1, a^{(1)}_{\beta_1+1},\cdots, a^{(1)}_{\beta_k-1}, 1,a^{(1)}_{\beta_k+1}, \dots,a_r^{(1)})\cdots$\\

$\hspace{1.75cm}(a^{(t-1)}_1,\ldots,a^{(t-1)}_{\beta_1-1},1, a^{(t-1)}_{\beta_1+1},\ldots, a^{(t-1)}_{\beta_k-1},1, a^{(t-1)}_{\beta_k+1},\ldots,a_r^{(t-1)}) $\\

$\hspace{2cm}(a^{(t+1)}_1,\ldots,a^{(t+1)}_{\beta_1-1},1, a^{(t+1)}_{\beta_1+1},\ldots, a^{(t+1)}_{\beta_k-1},1, a^{(t+1)}_{\beta_k+1},\ldots,a_r^{(t+1)}) \cdots$\\

$\hspace{2.25cm}(a^{(n+1)}_1,\ldots,a^{(n+1)}_{\beta_1-1},1, a^{(n+1)}_{\beta_1+1},\ldots, a^{(n+1)}_{\beta_k-1},1, a^{(n+1)}_{\beta_k+1},\ldots,a_r^{(n+1)}) \in Q$\\

for some $1 \leq t \leq n$. Then, we obtain \\

$\hspace{1cm} (a^{(1)}_1,\ldots,a^{(1)}_{\beta_1-1},  a^{(1)}_{\beta_1+1},\cdots, a^{(1)}_{\beta_k-1}, a^{(1)}_{\beta_k+1}, \dots,a_r^{(1)})\cdots$\\

$\hspace{2cm}(a^{(n)}_1,\ldots,a^{(n)}_{\beta_1-1},  a^{(n)}_{\beta_1+1},\ldots, a^{(n)}_{\beta_k-1},  a^{(n)}_{\beta_k+1},\ldots,a_r^{(n)}) \in I'$\\

or\\

$\hspace{0.75cm} (a^{(1)}_1,\ldots,a^{(1)}_{\beta_1-1},   a^{(1)}_{\beta_1+1},\cdots, a^{(1)}_{\beta_k-1},  a^{(1)}_{\beta_k+1}, \dots,a_r^{(1)})\cdots$\\

$\hspace{1.75cm}(a^{(t-1)}_1,\ldots,a^{(t-1)}_{\beta_1-1},  a^{(t-1)}_{\beta_1+1},\ldots, a^{(t-1)}_{\beta_k-1},  a^{(t-1)}_{\beta_k+1},\ldots,a_r^{(t-1)}) $\\

$\hspace{2cm}(a^{(t+1)}_1,\ldots,a^{(t+1)}_{\beta_1-1},  a^{(t+1)}_{\beta_1+1},\ldots, a^{(t+1)}_{\beta_k-1},  a^{(t+1)}_{\beta_k+1},\ldots,a_r^{(t+1)}) \cdots$\\

$\hspace{2.25cm}(a^{(n+1)}_1,\ldots,a^{(n+1)}_{\beta_1-1},  a^{(n+1)}_{\beta_1+1},\ldots, a^{(n+1)}_{\beta_k-1},  a^{(n+1)}_{\beta_k+1},\ldots,a_r^{(n+1)}) \in Q'$\\
Thus, $I'$ is an $(n,Q')$-ideal of $A'$.

(2) $\Longleftarrow$ (1)   Suppose that $I'  = I_1 \times \cdots \times I_{\beta_1-1} \times I_{\beta_1+1} \times \cdots \times I_{\beta_k-1} \times I_{\beta_k+1} \times \cdots \times I_r$ is an $(n,Q')$-ideal of $A' =A_1 \times \cdots \times A_{\beta_1-1} \times A_{\beta_1+1} \times \cdots \times A_{\beta_k-1} \times A_{\beta_k+1} \times \cdots \times A_r$ where $Q'=Q_1 \times \cdots \times Q_{\beta_1-1} \times Q_{\beta_1+1} \times \cdots \times Q_{\beta_k-1} \times Q_{\beta_k+1} \times \cdots \times I_r$. Using an argument similar to the previous part, we  can   see  that $I$ is an $(n,Q)$-ideal of $A$.
\end{proof}

\begin{theorem}\label{7} 
Let $A = A_1 \times \cdots \times A_r$ be a ring with identity and let  $I=I_1  \times \cdots \times I_r$ be  an ideal of $A$ in which  $I_i \subseteq Q_i$ are ideals of $A_i$ for each $1 \le i \le r$. The following statements are equivalent:
 \begin{enumerate}
  \item[(1)] $I$ is an $(n,Q)$-ideal of $A$ where $Q =Q_1 \times \cdots \times Q_{r}$; 
 \item[(2)] $I_1  \times \cdots \times I_{r-1}$ is an $(n,Q')$-ideal of $A_1 \times   \cdots \times A_{r-1}$ where $Q'=Q_1 \times \cdots \times Q_{r-1}$ and $Q_r = A_r$   or   $I_i$ is a $Q_i$-ideal of $A_i$ for every $1 \le i \le r$. 
 \end{enumerate}
 \end{theorem}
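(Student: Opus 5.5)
The plan is to run both directions componentwise. I will embed elements of a single factor $A_i$ into $A$ by padding with $1$'s (and, where useful, with $0$'s), in the same way as in the proof of Theorem \ref{6}. Throughout I use the observation that for $n=1$ the $(1,Q)$-condition is exactly the $Q$-ideal condition up to the order of the two factors: $ab\in I$ implies $a\in I$ or $b\in Q$.

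\emph{(2) $\Rightarrow$ (1).} In the first alternative, $Q_r=A_r$. Given $x_1\cdots x_{n+1}\in I$ with $x_j=(x_j^{(1)},\dots,x_j^{(r)})$, project onto $A_1\times\cdots\times A_{r-1}$ and apply the $(n,Q')$-property of $I_1\times\cdots\times I_{r-1}$. This yields either $x_1\cdots x_n\in I$ in the first $r-1$ coordinates, or some omitted product lying in $Q'$. The $r$-th coordinate of any omitted product lies in $Q_r=A_r$ automatically, so the second outcome lifts to $Q$. The first outcome needs the $r$-th coordinate of $x_1\cdots x_n$ to lie in $I_r$. I would obtain this by reducing to Theorem \ref{6}, viewing the last factor as the $\beta$-factor, provided the intended reading has $I_r=A_r$ as well. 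If it does not, I will record this as a hypothesis to be read into the statement.

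In the second alternative, each $I_i$ is a $Q_i$-ideal. Splitting off the last factor, each $I_i$ is an $(m,Q_i)$-ideal for every $m\ge1$: from $a_1\cdots a_{m+1}\in I_i$ we get $a_1\cdots a_m\in I_i$ or $a_{m+1}\in Q_i$, and in the latter case every omitted product containing $a_{m+1}$ lies in $Q_i$. For the product, suppose $x_1\cdots x_n\notin I$. Then some coordinate $i$ has $x_1^{(i)}\cdots x_n^{(i)}\notin I_i$. Applying the $Q_i$-ideal property repeatedly to the factorization $(x_1^{(i)}\cdots x_n^{(i)})\,x_{n+1}^{(i)}$ and its sub-factorizations, I aim to show $x_{n+1}^{(i)}\in Q_i$.

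The main obstacle comes next: I must find a \emph{single} $t$ with $x_1\cdots\widehat{x_t}\cdots x_{n+1}\in Q$ in \emph{all} coordinates at once. My first attempt is to choose $t$ so that the omitted factor is one whose coordinates in $I_j$ are not needed. Since $n\ge 1$ and every term can be absorbed via $x_{n+1}$, the remaining product has each coordinate either in $I_j\subseteq Q_j$ or containing $x_{n+1}^{(j)}\in Q_j$. I expect this to require $n\ge r$ or a pigeonhole count over coordinates, as in the proof of Proposition \ref{4}. If the count fails, I would look for a counterexample and restrict the statement accordingly.

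\emph{(1) $\Rightarrow$ (2).} Assume $I$ is an $(n,Q)$-ideal and $Q_r\neq A_r$; I will show each $I_i$ is a $Q_i$-ideal. Take $ab\in I_i$ with $a\notin Q_i$. Form the $n+1$ elements of $A$ that have $b$ and $a$ in coordinate $i$ in positions $1$ and $n+1$, entries $1$ elsewhere in coordinate $i$, and in the other coordinates entries chosen from $0$ and $1$ so that the total product lies in $I$. The omitted products are then forced out of $Q$, either because coordinate $i$ contains $a\notin Q_i$ times units, or because some coordinate $j$ consists of $1$'s and $Q_j$ is proper. Hence the $(n,Q)$-property gives the $n$-fold product in $I$, whose $i$-th coordinate is $b$, so $b\in I_i$. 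Arranging these $0/1$ patterns so that every omitted product escapes $Q$ is the delicate bookkeeping step. If instead $Q_r=A_r$, projecting away the last factor as in Theorem \ref{6} gives the first alternative.
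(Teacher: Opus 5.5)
There is a genuine gap, and it cannot be closed, because the statement is false as written. The points you set aside as hypotheses ``to be read into the statement'' are exactly where it breaks.

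In the first alternative of (2)$\Rightarrow$(1) you do need $I_r=A_r$. Take $n=r=2$, $A_1=A_2=\mathbb{Z}$, $I_1=Q_1=6\mathbb{Z}$ (which is $2$-absorbing, hence a $(2,Q_1)$-ideal), $I_2=0$ and $Q_2=\mathbb{Z}$. Then $(2,1)(3,1)(1,0)=(6,0)\in I$, while $(2,1)(3,1)=(6,1)\notin I$ and $(3,0),(2,0)\notin Q$.

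In the second alternative the right bookkeeping is this. Put $S_j=\{k : x_k^{(j)}\in Q_j\}$. The $Q_j$-ideal property makes every $S_j$ nonempty, and the coordinate in which $x_1\cdots x_n$ fails to lie in $I_j$ has $n+1\in S_j$. Any $t\le n$ that is not the sole element of a singleton $S_j$ then works. Your claim that every coordinate of the remaining product contains $x_{n+1}^{(j)}\in Q_j$ is not justified; only the failing coordinate gives that. This argument produces $t$ when $n\ge r$, and it genuinely fails for $n<r$. For example, $2\mathbb{Z}\times 3\mathbb{Z}$ has prime components, so each $I_i$ is an $I_i$-ideal. Yet it is not a $(1,Q)$-ideal for $Q=I$, since $(1,0)(0,1)=0$ with $(1,0)\notin I$ and $(0,1)\notin Q$.

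The step you asserted without a hedge, the $0/1$ padding in (1)$\Rightarrow$(2), fails outright for $n>r$. For $2\le t\le n$ the $i$-th coordinate of $x_1\cdots\widehat{x_t}\cdots x_{n+1}$ is $ab\in I_i\subseteq Q_i$. So each such $t$ must escape $Q$ through its own coordinate $j\neq i$ (with $Q_j$ proper) in which $x_t$ carries the only $0$, and this needs $n-1\le r-1$. No cleverer choice of entries rescues it. $I=Q=6\mathbb{Z}\times 5\mathbb{Z}$ is $3$-absorbing: among any four factors, keep those supplying $2$, $3$ and $5$ and omit a fourth. Hence it is a $(3,Q)$-ideal of $\mathbb{Z}\times\mathbb{Z}$ with $Q_2\neq\mathbb{Z}$, but $6\mathbb{Z}$ is not a $6\mathbb{Z}$-ideal, since $2\cdot 3\in 6\mathbb{Z}$.

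So your two arguments for the second alternative are valid in complementary ranges, $n\le r$ and $n\ge r$, and meet only at $n=r$. A correct theorem needs hypotheses of this kind, plus $I_r=A_r$ in the first alternative. In the case $Q_r=A_r$ of (1)$\Rightarrow$(2), pad $x_{n+1}$ with $0$ rather than $1$ in the last slot, since $I_r$ need not be $A_r$; that part is then fine.

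The paper's proof follows the same coordinatewise plan and misses the same issues. Its product in (1)$\Rightarrow$(2) has a number of factors fixed by $r$ rather than $n$. Its (2)$\Rightarrow$(1) only notes that each coordinate has some factor in $I_i$ or $Q_i$, without ever producing a common index $t$.
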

 \begin{proof}
(1) $\Longrightarrow$ (2) Let $I$ be an $(n,Q)$-ideal of $A$ where $Q =Q_1 \times \cdots \times Q_{r}$. Then, $I_1  \times \cdots \times I_{r-1}$ is an $(n,Q')$-ideal of $A_1 \times   \cdots \times A_{r-1}$ where $Q'=Q_1 \times \cdots \times Q_{r-1}$ by  Theorem \ref{6}.  Assume that $Q_r \neq A_r$, $a \in I_r$ and $xy \in I_j $ for $x,y \in A_j$ and $1 \leq j \leq r-1$ such that $x \notin Q_j$. Therefore, we have \\
 
 $(1,\ldots,1,\underbrace{x}_{j-th},1,\ldots,1)(1,\ldots,1,\underbrace{y}_{j-th},1,\ldots,1)(1,0,1,\ldots,1)(1,1,0,1,\ldots,1) \cdots $
 
 $(1,\ldots,1,0,\underbrace{1}_{j-th},1,\ldots,1)(1,\ldots,1,\underbrace{1}_{j-th},0,1,\ldots,1)\cdots(1,\ldots,1,0)(1,\ldots,1,a)$
 
 $\hspace{2cm}=(0,\ldots,0, \underbrace{xy}_{j-th},0\ldots,0,a) \in  I.$\\
 
  Since $I$ is an $(n,Q)$-ideal of $A$ and  \\
  
   $\hspace{1cm}(1,\ldots,1,\underbrace{x}_{j-th},1,\ldots,1)(1,0,1,\ldots,1)(1,1,0,1,\ldots,1) \cdots $
 
 $\hspace{1.5cm}(1,\ldots,1,0,\underbrace{1}_{j-th},1,\ldots,1)(1,\ldots,1,\underbrace{1}_{j-th},0,1,\ldots,1)\cdots$
 
 $\hspace{2cm}(1,\ldots,1,0)(1,\ldots,1,a) $
 
 $\hspace{2.5cm}=(0,\ldots,0, \underbrace{x}_{j-th},0\ldots,0,a) \notin  Q$,\\
 
   we get\\
 
 $\hspace{1cm}(1,\ldots,1,\underbrace{y}_{j-th},1,\ldots,1)(1,0,1,\ldots,1)(1,1,0,1,\ldots,1) \cdots $
 
 $\hspace{1.5cm}(1,\ldots,1,0,\underbrace{1}_{j-th},1,\ldots,1)(1,\ldots,1,\underbrace{1}_{j-th},0,1,\ldots,1)\cdots$
 
 $\hspace{2cm}(1,\ldots,1,0)(1,\ldots,1,a) $
 
 $\hspace{2.5cm}=(0,\ldots,0, \underbrace{y}_{j-th},0\ldots,0,a) \in I$,\\
 which means $y \in I_j$. Hence, $I_j$ is a $Q_j$-ideal of $A_i$ for all $1 \leq j \leq r-1$. By a similar argument, we conclude that $I_r$ is a $Q_r$-ideal of $A_r$.
 
 (2) $\Longrightarrow$ (1) Let $I_1  \times \cdots \times I_{r-1}$ be an $(n,Q')$-ideal of $A_1 \times   \cdots \times A_{r-1}$ where $Q'=Q_1 \times \cdots \times Q_{r-1}$ and $Q_r = A_r$. By Theorem \ref{6}, $I$ is an $(n,Q)$-ideal of $A$ where $Q =Q_1 \times \cdots \times Q_{r}$. Assume that $I_i$ is a $Q_i$-ideal of $A_i$ for every $1 \le i \le r$ and 
 $(a_1^{(1)},\ldots,a_r^{(1)})\cdots( a_1^{(n+1)},\ldots,a_r^{(n+1)}) \in I$ such that $a^{(k),}_i$s are in $A_i$. Then, $a_i^{(1)}\cdots a_i^{(n+1)} \in I_i$ for any $1 \leq i \leq r$. Then, there exists $1 \leq k \leq n+1$ such that $a_i^{(k)} \in I_i$ or $a_i^{(k)} \in Q_i$. Thus, $I$ is an $(n,Q)$-ideal of $A$ where $Q =Q_1 \times \cdots \times Q_{r}$.
 \end{proof}
\section{$\phi$-$(n,Q)$-ideals}
\begin{definition}
Let $I$ and $ Q$  be proper ideals of $A$  and $\phi: \mathcal{I}(A) \to \mathcal{I}(A) \cup \{\varnothing\}$  be a function. Then, $I$ is called a $\phi$-$(n,Q)$-ideal of $A$ if whenever $a_1\cdots a_{n+1} \in I \backslash \phi(I)$ for $a_1,\ldots,a_{n+1} \in A$, then $a_1\cdots  a_n \in I$ or there exists $1 \leq t \leq n$ such that $a_1\cdots \widehat{a_t} \cdots a_{n+1} \in Q.$
\end{definition}
 \begin{proposition}  
Let $I $   be a proper ideal of $A$ and $\phi: \mathcal{I}(A) \to \mathcal{I}(A) \cup \{\varnothing\}$ be a function.   If $I$ is a $\phi$-$(n,Q)$-ideal of $A$ for    some  proper ideal $Q$ of $A$, then $I \backslash \phi(I)  \subseteq Q$.
\end{proposition}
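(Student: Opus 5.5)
The argument mirrors the proof of Proposition \ref{1}, with the extra hypothesis $a \notin \phi(I)$ carried along. I will argue by contradiction. Suppose $I$ is a $\phi$-$(n,Q)$-ideal for some proper ideal $Q$, and that $I\backslash \phi(I) \nsubseteq Q$. Then there is an element $a \in I$ with $a \notin \phi(I)$ and $a \notin Q$. When $\phi(I)=\varnothing$ the condition $a\notin\phi(I)$ holds automatically, so this case needs no separate treatment.

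The key step is to take the product $1\cdots 1\cdot a$ with $n$ factors equal to $1$, that is, $a_1=\cdots=a_n=1$ and $a_{n+1}=a$. Then $a_1\cdots a_{n+1}=a \in I\backslash \phi(I)$, so the defining condition of a $\phi$-$(n,Q)$-ideal applies. It gives one of two alternatives:
\begin{itemize}
\item $a_1\cdots a_n = 1 \in I$; or
\item $a_1\cdots \widehat{a_t}\cdots a_{n+1} = a \in Q$ for some $1\le t\le n$, since omitting one of the $1$'s leaves the product $a$.
\end{itemize}
The first alternative contradicts that $I$ is proper, and the second contradicts the choice $a\notin Q$. Hence $I\backslash\phi(I)\subseteq Q$.

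There is no real obstacle here. The only point to check is that the hypotheses of the definition are met: the product must lie in $I\backslash\phi(I)$ rather than merely in $I$, which is exactly why the conclusion concerns $I\backslash\phi(I)$ and not all of $I$.
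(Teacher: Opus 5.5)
Your proof is correct and follows the paper's argument: pick $a\in I\backslash\phi(I)$ with $a\notin Q$, apply the definition to the product $1\cdots 1\cdot a$, and derive a contradiction. Your write-up is slightly more careful than the paper's, since you explicitly rule out the alternative $a\in Q$ and state the conclusion as $I\backslash\phi(I)\subseteq Q$, whereas the paper's last line mistakenly writes $I\subseteq\phi(I)\subseteq Q$.
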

\begin{proof}
Assume  that $I$ is a    $\phi$-$(n,Q)$-ideal where $Q$ is a proper ideal of $A$. Let  $ I \backslash \phi(I) \nsubseteq Q$. Then,    there exists $a \in I \backslash \phi(I)$ such that $a \notin Q$. Since $I$ is a  $\phi$-$(n,Q)$-ideal of $A$ and $1 \cdots 1\cdot  a\in I$ , we obtain $1  \in I$, a contradiction. Therefore, $I \subseteq \phi(I)  \subseteq Q$.
\end{proof}
\begin{theorem}
Let $I \subseteq Q$ be ideals of $A$ and $\phi: \mathcal{I}(A) \to \mathcal{I}(A) \cup \{\varnothing\}$ be a function. Then, the following statements are equivalent:
\begin{enumerate}
\item $I$ is a $\phi$-$(n,Q)$-ideal of $A$;
\item If $a_1 \cdots a_n \notin Q$ for $a_1,\ldots, a_n \in A$, then \[(I: a_1 \cdots a_n) \subseteq (I:a_1 \cdots a_{n-1}) \cup [\cup_{t=1}^{n-1}(Q:a_1 \cdots \widehat{a_t} \cdots a_n)] \cup (\phi(I):a_1 \cdots a_n).\]
\end{enumerate}
\end{theorem}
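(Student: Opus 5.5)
The plan is to prove both implications directly from the definition of a $\phi$-$(n,Q)$-ideal. The key point is a relabelling of the $n+1$ factors. Condition (2) involves a tuple $a_1,\ldots,a_n$ together with an element $x$ of the colon ideal $(I:a_1\cdots a_n)$. I will match this with the definition by treating the product as $a_1\cdots a_{n-1}\, x\, a_n$, so that $x$ plays the role of the $n$-th factor and $a_n$ the role of the $(n+1)$-th. Commutativity of $A$ makes this reordering harmless.

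For (1) $\Rightarrow$ (2), fix $a_1,\ldots,a_n$ with $a_1\cdots a_n\notin Q$ and take $x\in(I:a_1\cdots a_n)$. If $x\in(\phi(I):a_1\cdots a_n)$ we are done. When $\phi(I)=\varnothing$ this colon is empty, and we simply pass to the other case. Otherwise $a_1\cdots a_{n-1}xa_n\in I\backslash\phi(I)$, and we apply (1) with $b_i=a_i$ for $i\le n-1$, $b_n=x$ and $b_{n+1}=a_n$. The definition then gives one of three outcomes.
\begin{itemize}
\item[(a)] $b_1\cdots b_n=a_1\cdots a_{n-1}x\in I$. Then $x\in(I:a_1\cdots a_{n-1})$.
\item[(b)] Omitting $b_n=x$ gives an element of $Q$. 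This would say $a_1\cdots a_n\in Q$, which is excluded by hypothesis.
\item[(c)] Omitting $b_t$ for some $t\le n-1$ gives an element of $Q$. Then $x\,a_1\cdots\widehat{a_t}\cdots a_n\in Q$, i.e. $x\in(Q:a_1\cdots\widehat{a_t}\cdots a_n)$.
\end{itemize}
So $x$ lies in the stated union.

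For (2) $\Rightarrow$ (1), suppose $b_1\cdots b_{n+1}\in I\backslash\phi(I)$ and $b_1\cdots b_n\notin I$. If $b_1\cdots b_{n-1}b_{n+1}\in Q$, the required conclusion holds with $t=n$. Otherwise set $a_i=b_i$ for $i\le n-1$ and $a_n=b_{n+1}$. Then $a_1\cdots a_n\notin Q$, so (2) applies to $x=b_n$, which lies in $(I:a_1\cdots a_n)$ but not in $(\phi(I):a_1\cdots a_n)$. Hence $x$ lies in $(I:a_1\cdots a_{n-1})$ or in some $(Q:a_1\cdots\widehat{a_t}\cdots a_n)$ with $t\le n-1$.
\begin{itemize}
\item The first case says $b_1\cdots b_n\in I$, contradicting our assumption.
\item The second case says $b_1\cdots\widehat{b_t}\cdots b_{n+1}\in Q$ for some $t\le n-1$, which is the required conclusion.
\end{itemize}

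The only delicate step is the index bookkeeping. The omitted factor in (2) ranges only over $t\le n-1$, whereas the definition allows $t\le n$. The resolution is that the missing case, omitting $x$, is exactly the hypothesis $a_1\cdots a_n\notin Q$. In the converse direction that case must be disposed of first, before invoking (2). I also need to check that the convention $\phi(I)=\varnothing$ causes no trouble; it does not, since in that case the corresponding colon set is empty.
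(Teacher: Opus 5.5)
Your proof is correct and follows the same route as the paper. In both directions you treat the colon element $x$ as one of the first $n$ factors, with $a_n$ in the last slot, and note that omitting $x$ is exactly the excluded case $a_1\cdots a_n\in Q$; the only cosmetic difference is that in (2)$\Rightarrow$(1) the paper takes $x=a_1$ with the tuple $(a_2,\ldots,a_{n+1})$ and disposes of $t=1$ first, whereas you take $x=b_n$ and dispose of $t=n$ first.
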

\begin{proof}
(1) $\Longrightarrow$ (2) Let $I$ be a $\phi$-$(n,Q)$-ideal of $A$ and  $a_1 \cdots a_n \notin Q$ for $a_1,\ldots, a_n \in A$. Assume that $x \in (I: a_1 \cdots a_n)$. So, $xa_1 \cdots a_n \in I$. Suppose that $xa_1 \cdots a_n \in   \phi(I).$ This implies that $x \in (\phi(I):a_1 \cdots a_n)$. Let $xa_1 \cdots a_n  \notin \phi(I).$ Since $I$ is a $\phi$-$(n,Q)$-ideal of $A$ and $a_1 \cdots a_n \notin Q$, we obtain $xa_1 \cdots a_{n-1} \in I$ or $xa_1 \cdots \widehat{a_t} \cdots a_{n-1} \in Q$ for some $1 \leq t \leq n-1$. This means that   $x \in (I:a_1 \cdots a_{n-1})$ or $x \in (Q:a_1 \cdots \widehat{a_t} \cdots a_n)$. Thus, \[(I: a_1 \cdots a_n) \subseteq (I:a_1 \cdots a_{n-1}) \cup [\cup_{t=1}^{n-1}(Q:a_1 \cdots \widehat{a_t} \cdots a_n)] \cup (\phi(I):a_1 \cdots a_n).\]

(2) $\Longrightarrow$ (1) Assume that  $a_1,\ldots,a_{n+1} \in A$ such that $a_1\cdots a_{n+1} \in I \backslash \phi(I)$ but $a_1\cdots   a_n \notin I$. So, we have $a_1 \in  (I:a_2 \cdots a_{n+1})$. Let $a_2 \cdots a_{n+1} \in Q$. Then, $Q$ is a $\phi$-$(n,Q)$-ideal of $A$. Now, let us assume that  $a_2 \cdots a_{n+1} \notin Q$. By the hypothesis, we obtain $a_1 \in \cup_{t=2}^{n}(Q:a_2 \cdots \widehat{a_t} \cdots a_{n+1})$ as $a_1 \cdots  a_n \notin I$ and $a_1 \cdots a_{n+1} \notin \phi(I)$ which implies $a_1\cdots \widehat{a_t} \cdots a_{n+1} \in Q $ for some $2 \leq t \leq n$. Hence, $Q$ is a $\phi$-$(n,Q)$-ideal of $A$.
\end{proof}
\begin{proposition}
Let   $\phi: \mathcal{I}(A) \to \mathcal{I}(A) \cup \{\varnothing\}$ be a function.
\begin{enumerate}
\item If $I$ is a $\phi$-$(n,Q)$-ideal   of $A$ for each ideal $Q$   properly containing $I$, then $I$ is a $\phi$-$n$-absorbing primary ideal.
\item  If $I$ is a $\phi$-$(n,Q)$-ideal of $A$ for each ideal $Q$   properly containing $\sqrt{I}$ and $\sqrt{\phi(I)} =\phi(\sqrt{I})$, then $\sqrt{I}$ is an $\phi$-$n$-absorbing  ideal.
\item The following conditions are equivalent:
\begin{enumerate}
\item[(i)] $I$ is a $\phi$-$n$-absorbing primary ideal.
\item[(ii)] $I$ is a  $\phi$-$(n,\sqrt{I})$-ideal.
\item[(iii)] $I$ is a  $\phi$-$(n,Q)$-ideal of $A$ for each ideal $Q$ of $A$ containing $\sqrt{I}$.
\end{enumerate}
\end{enumerate}
\end{proposition}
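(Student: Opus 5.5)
The plan is to mirror the proof of Proposition \ref{2.1}, carrying the extra hypothesis $a_1\cdots a_{n+1}\notin\phi(I)$ through each step.

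For (1), I take $a_1\cdots a_{n+1}\in I\backslash\phi(I)$ with $a_1\cdots a_n\notin I$. Put $Q=I+\langle a_1\cdots a_n\rangle$; this properly contains $I$. If $Q=A$ the hypothesis is vacuous, so, as in Proposition \ref{2.1}, I assume $Q$ is proper. Since $I$ is a $\phi$-$(n,Q)$-ideal, some $b=a_1\cdots\widehat{a_t}\cdots a_{n+1}$ lies in $Q$. Write $b=x+y\,a_1\cdots a_n$ with $x\in I$ and $y\in A$. Then
\[
b^2=xb+y\,(a_1\cdots a_{n+1})(a_1\cdots\widehat{a_t}\cdots a_n)\in I,
\]
so $b\in\sqrt I$. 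This is exactly the $\phi$-$n$-absorbing primary condition.

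For (2), I start from $a_1\cdots a_{n+1}\in\sqrt I\backslash\phi(\sqrt I)$ with $a_1\cdots a_n\notin\sqrt I$. Choose $k$ with $a_1^k\cdots a_{n+1}^k\in I$. The key point is that $a_1^k\cdots a_{n+1}^k\notin\phi(I)$. Otherwise $a_1\cdots a_{n+1}\in\sqrt{\phi(I)}=\phi(\sqrt I)$, which is where the hypothesis $\sqrt{\phi(I)}=\phi(\sqrt I)$ enters. Also $a_1^k\cdots a_n^k\notin I$, since otherwise $a_1\cdots a_n\in\sqrt I$.

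Now put $Q=\sqrt I+\langle a_1\cdots a_n\rangle$, which properly contains $\sqrt I$. Applying the $\phi$-$(n,Q)$ property to the elements $a_i^k$ gives $(a_1\cdots\widehat{a_t}\cdots a_{n+1})^k\in Q$ for some $1\le t\le n$. The same squaring computation as in Proposition \ref{2.1}(2) then puts $(a_1\cdots\widehat{a_t}\cdots a_{n+1})^{2k}$ in $\sqrt I$. Hence $a_1\cdots\widehat{a_t}\cdots a_{n+1}\in\sqrt I$, so $\sqrt I$ is $\phi$-$n$-absorbing.

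The main obstacle is exactly this transfer of the condition ``$\notin\phi$'' to $k$-th powers. The hypothesis handles it only if $\phi(\sqrt I)$ is read as a genuine ideal, and the degenerate case $\phi=\varnothing$ has to be checked separately (it is trivial there). A second point to watch is that $Q$ might equal $A$; I would handle this as the paper does, or by noting that a proper-ideal requirement on $Q$ is implicit in the hypotheses.

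For (3), (i)$\Rightarrow$(ii) is the definition, since the $\phi$-$n$-absorbing primary condition is exactly the $\phi$-$(n,\sqrt I)$ condition. For (ii)$\Rightarrow$(iii), any conclusion landing in $\sqrt I$ also lands in every $Q\supseteq\sqrt I$. For (iii)$\Rightarrow$(i), specialize to $Q=\sqrt I$; this recovers the definition of a $\phi$-$n$-absorbing primary ideal.
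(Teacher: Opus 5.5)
Your proposal is correct and follows the paper's proof essentially step for step. It uses the same auxiliary ideals $Q=I+\langle a_1\cdots a_n\rangle$ and $Q=\sqrt I+\langle a_1\cdots a_n\rangle$, the same squaring identity, and in (2) the same use of $\sqrt{\phi(I)}=\phi(\sqrt I)$ to get $a_1^k\cdots a_{n+1}^k\notin\phi(I)$. Your hedge about $Q=A$ is unnecessary: if $Q=A$, then $b\in Q$ trivially and $b^2=xb+y(a_1\cdots a_{n+1})(a_1\cdots\widehat{a_t}\cdots a_n)$ still lies in $I$ (likewise $b^{2k}\in\sqrt I$ in (2)), so no properness assumption on $Q$ is needed.
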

\begin{proof}
(1) The proof is similar to that in the proof of Theorem \ref{2.1}(1). 

(2) Let $I$ be  an $(n,Q)$-ideal of $A$ for each ideal $Q$ of $A$ properly containing $\sqrt{I}$. Suppose  that $a_1 \cdots a_{n+1} \in \sqrt{I} \backslash \phi(\sqrt{I})$ and $a_1 \cdots a_n \notin \sqrt{I}$. This means  that $(a_1 \cdots a_{n+1})^k=a_1^k \cdots a_{n+1}^k\in I$ for some $k \in \mathbb{N}$. If $a_1^k \cdots a_{n+1}^k \in \phi(I)$, then  $a_1  \cdots a_{n+1} \in \sqrt{\phi(I)} =\phi(\sqrt{I})$, a contradiction. Then, $a_1^k \cdots a_{n+1}^k \in I \backslash \phi(I)$. Now, in a similar manner to the proof of Theorem \ref{2.1}(2),  one can complete the proof.

(3) (i) $\Longrightarrow$ (ii) and (ii) $\Longrightarrow$ (iii) are clear.

(iii) $\Longrightarrow$ (i)  Following a similar argument as in the proof of Theorem \ref{2.1}(3)((iii) $\Longrightarrow$ (i)), the claim is proved.
\end{proof}
Let   $\phi: \mathcal{I}(A) \to \mathcal{I}(A) \cup \{\varnothing\}$ be a function, $I$  an $\phi$-$(n,Q)$-ideal of $A$ and $a_1,\cdots, a_{n+1} \in A$.  Then, $(a_1, \cdots, a_{n+1})$ is called a $\phi$-$(n+1,Q)$-tuple of $I$ if $a_1  \cdots  a_{n+1} \in \phi(I)$, $a_1  \cdots  a_n \notin I$ and $a_1  \cdots  \widehat{a_t} \cdots a_{n+1} \notin Q$ for all $1 \leq t \leq n$. 

Let     $A$ is  a commutative ring with identity $1$ and  $M$ be an $A$-module. Recall
from \cite{Anderson} that   $A(+)M=\{(a,x) \mid a \in A, x \in M\}$ with  the  addition  $ (a, x) + (b, y) = (a + b, x +y) $ and the multiplication   $(a, x)(b, y) = (ab, bx + ay)$  is a  ring with identity $(1, 0)$.  This   ring is called the idealization of  $M$. Suppose  that  $I$ is an ideal of $A$ and $F$ a submodule of $M$. Theorem 3.3 in \cite{Anderson} indicates  that  $IM \subseteq F$  if and only if $I(+)F$ is an ideal of $A(+)M$.

 Here, we determine the relation between $\phi$-$(n,Q)$-ideals of $A$ and $\psi$-$(n,Q(+)M)$-ideals of $A(+)M$. 
 \begin{theorem} \label{idealization}
 Let $I$ and $Q$ be proper  ideals of $A$ and $M$ be  $A$-module. Assume that  $\phi: \mathcal{I}(A) \to \mathcal{I}(A) \cup \{\varnothing\}$ and $\psi: \mathcal{I}(A(+)M) \to \mathcal{I}(A(+)M) \cup \{\varnothing\}$ are two  functions with  $\phi(I)M\subseteq F$ for some submodule $F$ of $M$ and $\psi(I (+) M)=\phi(I) (+) F$. Then, the following statements are equivalent:
\begin{enumerate}
\item   $I$ is a $\phi$-$(n,Q)$-ideal of $A$, and if  $(a_1,\ldots,a_{n+1})$  is a $\phi$-$(n+1,Q)$-tuple of $I$, then the second component of $(a_1, x_1)\cdots (a_{n+1},x_{n+1})$ is in $F$ for every $x_1,\ldots x_{n+1} \in M$;
\item $I(+)M$ is a $\psi$-$(n,Q(+)M)$-ideal of  $A(+)M$.
\end{enumerate}
 \end{theorem}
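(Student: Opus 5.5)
The plan is to compute products in $A(+)M$ explicitly and compare membership componentwise. For $(a_1,x_1),\ldots,(a_{n+1},x_{n+1})\in A(+)M$ the product is $(a_1\cdots a_{n+1},\ y)$, where $y=\sum_{i=1}^{n+1} a_1\cdots\widehat{a_i}\cdots a_{n+1}x_i$. Two observations will be used throughout:
\begin{itemize}
\item $(b,z)\in I(+)M$ if and only if $b\in I$, and $(b,z)\in Q(+)M$ if and only if $b\in Q$, with no condition on $z$;
\item $(b,z)\in\psi(I(+)M)=\phi(I)(+)F$ if and only if $b\in\phi(I)$ and $z\in F$.
\end{itemize}
So $I(+)M\setminus\psi(I(+)M)$ consists of the pairs $(b,z)$ with $b\in I$ such that either $b\notin\phi(I)$, or $b\in\phi(I)$ but $z\notin F$. (If $\phi(I)=\varnothing$, read $\psi(I(+)M)$ as $\varnothing$ as well.)

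For (1)$\Rightarrow$(2), I will take $(a_1,x_1)\cdots(a_{n+1},x_{n+1})\in I(+)M\setminus\psi(I(+)M)$ and suppose that neither conclusion holds, i.e. $a_1\cdots a_n\notin I$ and $a_1\cdots\widehat{a_t}\cdots a_{n+1}\notin Q$ for all $1\le t\le n$. I then split into two cases.
\begin{itemize}
\item If $a_1\cdots a_{n+1}\notin\phi(I)$, the $\phi$-$(n,Q)$ property of $I$ gives $a_1\cdots a_n\in I$ or some $a_1\cdots\widehat{a_t}\cdots a_{n+1}\in Q$, which is a contradiction.
\item If $a_1\cdots a_{n+1}\in\phi(I)$, then $(a_1,\ldots,a_{n+1})$ is exactly a $\phi$-$(n+1,Q)$-tuple of $I$. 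The extra hypothesis in (1) puts the second component in $F$, so the product lies in $\phi(I)(+)F$, again a contradiction.
\end{itemize}
This direction is pure bookkeeping.

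For (2)$\Rightarrow$(1), I first show that $I$ is a $\phi$-$(n,Q)$-ideal. Given $a_1\cdots a_{n+1}\in I\setminus\phi(I)$, I lift to $(a_i,0)$. The product $(a_1\cdots a_{n+1},0)$ lies in $I(+)M$ but not in $\phi(I)(+)F$, since its first component is outside $\phi(I)$. Property (2) then yields $(a_1\cdots a_n,0)\in I(+)M$ or some omitted product in $Q(+)M$, and projecting to the first component gives what is needed.

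For the tuple condition, let $(a_1,\ldots,a_{n+1})$ be a $\phi$-$(n+1,Q)$-tuple, take arbitrary $x_i\in M$, and suppose the second component $y\notin F$. Then the product $(a_1\cdots a_{n+1},y)$ lies in $I(+)M$ but not in $\phi(I)(+)F$. Property (2) forces $a_1\cdots a_n\in I$ or some $a_1\cdots\widehat{a_t}\cdots a_{n+1}\in Q$, contradicting the tuple assumptions. Hence $y\in F$.

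The only delicate point is the exact description of $I(+)M\setminus(\phi(I)(+)F)$, in particular making sure that in the tuple case the element genuinely leaves $\psi(I(+)M)$ only through its second component. I would also note that $\phi(I)\subseteq I$ is implicitly needed for a product with first component in $\phi(I)$ to belong to $I(+)M$; I will handle this by simply working with elements of $I(+)M$ throughout. The hypothesis $\phi(I)M\subseteq F$ only ensures that $\phi(I)(+)F$ is an ideal, so that $\psi$ is well defined.
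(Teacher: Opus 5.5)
Your proposal is correct and follows the paper's proof essentially step for step: the same case split on whether $a_1\cdots a_{n+1}\in\phi(I)$ for (1)$\Rightarrow$(2), and the same lifting to $(a_i,0)$ plus a contradiction argument on the tuple for (2)$\Rightarrow$(1). Your caveat about $\phi(I)\subseteq I$ is well founded, but working inside $I(+)M$ does not take care of it. The tuple step of (2)$\Rightarrow$(1) needs $a_1\cdots a_{n+1}\in I$ to place the product in $I(+)M$, and the paper uses this tacitly at the same point. It cannot be dropped: with $A=M=\mathbb{Z}$, $I=Q=2\mathbb{Z}$, $\phi(I)=F=3\mathbb{Z}$, $n=1$, condition (2) holds, but the tuple $(3,1)$ with $x_1=1$, $x_2=0$ violates (1). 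So state the standard convention $\phi(I)\subseteq I$ explicitly as a hypothesis.
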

 \begin{proof}
 (1) $\Longrightarrow$ (2) Assume that $(a_1, x_1)\cdots (a_{n+1},x_{n+1}) \in  I(+)M  \backslash \phi(I) (+) F$ for $a_1,\ldots, a_{n+1} \in A$ and $x_1,\ldots,x_{n+1} \in M$. Let $a_1 \cdots a_{n+1} \in \phi(I)$. Since $a_1 \cdots a_{n+1} \in I$  and $(a_1, x_1)\cdots (a_{n+1},x_{n+1}) \notin  \phi(I) (+) F$,  we conclude that the second component of $(a_1, x_1)\cdots (a_{n+1},x_{n+1})$ is not in $F$. Therefore, $(a_1,\ldots,a_{n+1})$   is not a $\phi$-$(n+1,Q)$-tuple of $I$. This means $a_1 \cdots  a_n \in I$ or $a_1 \cdots  \widehat{a_t} \cdots a_n \in Q$ for some $1 \leq t \leq n$. Now, let $a_1 \cdots a_{n+1} \notin \phi(I)$. Since $I$ is a $\phi$-$(n,Q)$-ideal of $A$ and $a_1 \cdots a_{n+1} \in I$, we get $a_1 \cdots  a_n \in I$ or $a_1 \cdots  \widehat{a_t} \cdots a_n \in Q$ for some $1 \leq t \leq n$, again. This implies that $(a_1, x_1)\cdots (a_n,x_n) \in  I(+)M $ or $(a_1, x_1)\cdots \widehat{(a_t, x_t)} \cdots (a_{n+1},x_{n+1}) \in  Q(+)M$ for some $1 \leq t \leq n$. Thus, $I(+)M$ is a $\psi$-$(n,Q(+)M)$-ideal of  $A(+)M$.
 
 (2) $\Longrightarrow$ (1) Let  $I(+)M$ be  a $\psi$-$(n,Q(+)M)$-ideal of  $A(+)M$. Assume that $a_1 \cdots a_{n+1} \in I \backslash \phi(I)$ for $a_1, \ldots, a_{n+1} \in A $. Since $\psi(I (+) M)=\phi(I) (+) F$, we have  $(a_1, 0)\cdots (a_{n+1},0) \in  I(+)M  \backslash \psi( I  (+) M)$. Then, we get $(a_1, 0)\cdots (a_n,0) \in  I(+)M $ or $(a_1,0)\cdots \widehat{(a_t, 0)} \cdots (a_{n+1},0) \in  Q(+)M$ for some $1 \leq t \leq n$. This means that $a_1 \cdots  a_n \in I$ or $a_1 \cdots  \widehat{a_t} \cdots a_n \in Q$ for some $1 \leq t \leq n$, as needed. Now, assume that  $a_1 \cdots  a_{n+1} \in \phi(I)$  but $a_1 \cdots  a_n \notin I$ and $a_1 \cdots  \widehat{a_t} \cdots a_n \notin Q$ for every  $1 \leq t \leq n$. Let us assume that the second component of $(a_1, x_1)\cdots (a_{n+1},x_{n+1})$ is not in $F$ for   $x_1,\ldots x_{n+1} \in M$. Hence, we have $(a_1, x_1)\cdots (a_{n+1},x_{n+1}) \in  I(+)M  \backslash \phi(I) (+) F$. Therefore, we conclude that  $(a_1, x_1)\cdots (a_n,x_n) \in  I(+)M $, i.e., $a_1 \cdots  a_n \in I$ or $(a_1, x_1)\cdots \widehat{(a_t, x_t)} \cdots (a_{n+1},x_{n+1}) \in  Q(+)M$ for some $1 \leq t \leq n$, i.e., $a_1 \cdots  \widehat{a_t} \cdots a_n \in Q$, a contradiction.
 \end{proof}
 \begin{theorem}
 Let $I$ and $Q$ be proper  ideals of $A$ and $M$ be  $A$-module. Then,   $I$ is an  $(n,Q)$-ideal of $A$ if and only if $I (+) M$ is an $(n,Q(+)M)$-ideal of $A (+) M$. 
 \end{theorem}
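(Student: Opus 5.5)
The plan is to argue directly from the definition in both directions. The key fact is the formula for products in the idealization: $(a_1,x_1)\cdots(a_{n+1},x_{n+1}) = (a_1\cdots a_{n+1}, y)$ for some $y\in M$. Consequently, such a product lies in $I(+)M$ exactly when $a_1\cdots a_{n+1}\in I$. Likewise, a product of any sub-collection of the factors lies in $I(+)M$, respectively $Q(+)M$, exactly when the corresponding product of first components lies in $I$, respectively $Q$. This holds because the second component is unrestricted, and $I(+)M$, $Q(+)M$ are ideals since $IM\subseteq M$ and $QM\subseteq M$ (Theorem 3.3 of \cite{Anderson}). Note also that $I(+)M$ and $Q(+)M$ are proper, because $I$ and $Q$ are.

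Alternatively, one could derive the statement from Theorem \ref{idealization} by taking $\phi(I)=\varnothing$, $\psi(I(+)M)=\varnothing$ and $F=M$. With this choice there are no $\phi$-$(n+1,Q)$-tuples, so the extra condition in Theorem \ref{idealization}(1) is vacuous, and $\phi$-$(n,Q)$-ideals are just $(n,Q)$-ideals. However, the hypothesis $\psi(I(+)M)=\phi(I)(+)F$ does not quite make sense when $\phi(I)=\varnothing$, so I would instead give the short direct argument.

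($\Rightarrow$) Suppose $I$ is an $(n,Q)$-ideal and $(a_1,x_1)\cdots(a_{n+1},x_{n+1})\in I(+)M$. Then $a_1\cdots a_{n+1}\in I$. Hence either $a_1\cdots a_n\in I$, or $a_1\cdots\widehat{a_t}\cdots a_{n+1}\in Q$ for some $1\le t\le n$. In the first case $(a_1,x_1)\cdots(a_n,x_n)\in I(+)M$. In the second case $(a_1,x_1)\cdots\widehat{(a_t,x_t)}\cdots(a_{n+1},x_{n+1})\in Q(+)M$.

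($\Leftarrow$) Suppose $I(+)M$ is an $(n,Q(+)M)$-ideal and $a_1\cdots a_{n+1}\in I$. Then $(a_1,0)\cdots(a_{n+1},0)=(a_1\cdots a_{n+1},0)\in I(+)M$. Applying the hypothesis gives either $(a_1\cdots a_n,0)\in I(+)M$ or $(a_1\cdots\widehat{a_t}\cdots a_{n+1},0)\in Q(+)M$ for some $t$. Reading off first components yields the required conclusion for $I$.

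There is no real obstacle here. The only point needing care is the bookkeeping observation that membership in $J(+)M$ depends solely on the first component, which is immediate from the definition of $J(+)M$.
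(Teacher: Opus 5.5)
Your direct argument is correct and complete. Both directions rest on one observation: a product in $A(+)M$ has first component equal to the product of the first components, and membership in $I(+)M$ or $Q(+)M$ depends only on that first component. For the converse, using the elements $(a_i,0)$ is the right choice. You also check that $I(+)M$ and $Q(+)M$ are proper, which the definition requires and the paper never states explicitly.

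The paper does not argue directly. Its proof is a single line, specializing Theorem \ref{idealization} to $F=M$ and $\phi=\psi=\varnothing$.

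Your worry about that specialization is overstated. Under the set-theoretic reading $\varnothing(+)M=\{(a,x): a\in\varnothing,\ x\in M\}=\varnothing$, the hypothesis $\psi(I(+)M)=\phi(I)(+)F$ holds, and $\phi(I)M\subseteq F=M$ is trivial. The tuple condition is vacuous, as you noted, because no product lies in $\varnothing$. So the paper's derivation is legitimate.

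The two routes trade off differently. The paper's is shorter and presents the result as the $\phi=\varnothing$ case of the general idealization theorem. Yours is self-contained, needs no convention about $\varnothing(+)M$, and does not rely on the proof of Theorem \ref{idealization}. In effect, your argument is that proof with the case $a_1\cdots a_{n+1}\in\phi(I)$ removed.
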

 \begin{proof}
 Take  $F=M$, $\phi=\varnothing$ and  $\psi=\varnothing$  in Theorem \ref{idealization}.
 \end{proof}


\end{document}